\newcolumntype{C}[1]{>{\centering\arraybackslash}p{#1}}
\newcommand{\qP}{{\bf P}}
\newcommand{\qp}{{\bf p}}
\newcommand{\qF}{{\bf F}}
\newcommand{\qw}{{\bf w}}
\newcommand{\qs}{{\bf s}}
\newcommand{\qz}{{\bf z}}
\newtheorem{Lemma}{Lemma}
\begin{document}

\title{Electric Vehicle Charge Scheduling Mechanism to Maximize Cost Efficiency and User Convenience}

\author{\IEEEauthorblockN{Hwei-Ming Chung, \IEEEmembership{Student Member, IEEE}, Wen-Tai Li, \IEEEmembership{Member, IEEE}, Chau Yuen, \IEEEmembership{Senior Member, IEEE}, Chao-Kai Wen, \IEEEmembership{Member, IEEE}, and No{\"e}l Crespi, \IEEEmembership{Senior Member, IEEE}}

\thanks{This research was supported in part by the project under Grant NSFC 61750110529 and in part by SUTD-MIT International Design Centre (IDC; idc.sutd.edu.sg).
Also, the work of C.-K. Wen was supported in part by the Ministry of Science and Technology of Taiwan under Grants MOST 103-2221-E-110-029-MY3.}

\thanks{H.-M. Chung was with National Sun Yat-sen University, Kaohsiung 804, Taiwan.
He is now with Department of Informatics, University of Oslo, Oslo 0373, Norway, (e-mail: hweiminc@ifi.uio.no).}

\thanks{W.-T. Li and C. Yuen (corresponding author) are with Department of Engineering Product Development, Singapore  University  of  Technology  and  Design, Singapore 487372 (e-mail: \{wentai\_li, yuenchau\}@sutd.edu.sg).}

\thanks{C.-K. Wen is with the Institute of Communications Engineering, National Sun Yat-sen University, Kaohsiung 804, Taiwan (e-mail: chaokai.wen@mail.nsysu.edu.tw).}

\thanks{No{\"e}l Crespi is with Institut Mines-Telecom, Telecom Sud-Paris, Paris, France (e-mail: noel.crespi@institut-telecom).}

}

\maketitle

\begin{abstract}
This paper investigates the fee scheduling problem of electric vehicles (EVs) at the micro-grid scale.
This problem contains a set of charging stations controlled by a central aggregator.
One of the main stakeholders is the operator of the charging stations, who is motivated to minimize the cost incurred by the charging stations, while the other major stakeholders are vehicle owners who are mostly interested in user convenience, as they want their EVs to be fully charged as soon as possible.
A bi-objective optimization problem is formulated to jointly optimize two factors that correspond to these stakeholders.
An online centralized scheduling algorithm is proposed and proven to provide a Pareto-optimal solution.
Moreover, a novel low-complexity distributed algorithm is proposed to reduce both the transmission data rate and the computation complexity in the system.
The algorithms are evaluated through simulation, and results reveal that the charging time in the proposed method is $30\%$ less than that of the compared methods proposed in the literature.
The data transmitted by the distributed algorithm is $33.25\%$ lower than that of a centralized one.
While the performance difference between the centralized and distributed algorithms is only $2\%$, the computation time shows a significant reduction.

\end{abstract}
\begin{IEEEkeywords}
Electric vehicles, Pareto optimality, online algorithm, smart grid, scheduling.
\end{IEEEkeywords}

\section{Introduction}
\IEEEPARstart{T}{he} multiple issues related to the greenhouse gas emissions of internal combustion engines in conventional vehicles have had a dramatic effect on the development of electric vehicles (EVs).
Recent technological advances in manufacturing efficient batteries, which improve yearly by $20\%$ in terms of cost and $120 \%$ in terms of capacity while providing increased charging rates \cite{2014-EV-everywhere,2014-grid-of-future,EV-outlook}, continue to enhance the attractiveness of EVs.
Accordingly, the demand for EVs has increased by $80 \%$ since 2011 \cite{EV-outlook}.

Two main stakeholders in the problem of charge scheduling are the operator of charging stations and EV owners.
Therefore, two important criteria in evaluating the efficiency of a charge scheduling algorithm are the total charging cost and the users' convenience level.
Many studies have addressed cost minimization \cite{2012-He-EV-selection,2012-Bessa-EV-selection,2014-tang-EV-mincost,2016-shao-EV-mincost, 2016-John-EV-integer} and user-convenience maximization \cite{2012-wen-EV-selection, 2012-peter-EV-selection, 2017-mal-EV-selection} separately, but only a few studies have regarded both as underlying merit factors \cite{2015-stijn-EVselect,2013-Ngu-EV-joint,2016-xu-jier-charg,2016-SI-jier-charg}.
The authors in \cite{2012-He-EV-selection,2012-Bessa-EV-selection,2014-tang-EV-mincost} attempted to minimize charging costs for parking station owners, whereas the authors in \cite{2016-shao-EV-mincost,2016-John-EV-integer} minimized grid-generation cost.
Despite the good results obtained in \cite{2012-He-EV-selection,2012-Bessa-EV-selection,2014-tang-EV-mincost,2016-shao-EV-mincost, 2016-John-EV-integer}, user convenience was ignored.
By contrast, user convenience was regarded as the main objective function of the scheduling problem to be maximized in \cite{2012-wen-EV-selection,2012-peter-EV-selection,2017-mal-EV-selection}.
%while was in \cite{2012-wen-EV-selection,2012-peter-EV-selection,2017-mal-EV-selection}, user convenience was regarded as the main objective function of the scheduling problem to be maximized.

In \cite{2012-wen-EV-selection, 2012-peter-EV-selection}, user convenience was defined based on the charging states of EVs, and algorithms were proposed to maximize user convenience.
The same problem was considered in \cite{2017-mal-EV-selection}; a distributed algorithm was proposed to solve the problem efficiently.
Although user-convenience maximization was considered in \cite{2012-wen-EV-selection,2012-peter-EV-selection,2017-mal-EV-selection}, the charging cost was not discussed, and no cost-effective strategy was proposed to optimize the costs for charging station or EV owners.

Several studies examined both cost minimization and user-convenience maximization \cite{2015-stijn-EVselect,2013-Ngu-EV-joint,2016-xu-jier-charg,2016-SI-jier-charg}.
These literatures mainly focused on combining the operation of the power grid and the control of EV charging.
The day-ahead scheduling presented in \cite{2015-stijn-EVselect} minimized the charging cost and determined the charging profile by adopting the reinforcement learning method.
The learning index can then be regarded as one kind of user convenience.
In \cite{2013-Ngu-EV-joint}, a solution was proposed to minimize the total home electricity cost while considering users' comfort levels.
References \cite{2016-xu-jier-charg,2016-SI-jier-charg} also provided a control strategy for the grid operator, where user convenience is provided to select the EVs on the basis of the day-ahead decision.
The present work differs from \cite{2015-stijn-EVselect,2013-Ngu-EV-joint,2016-xu-jier-charg,2016-SI-jier-charg} in that it focuses on the online charge scheduling of EVs in multiple parking stations with charging rate limits and load constraints.
Such a scenario is an issue because of the difficulties in reaching a consensus between the two stakeholders.
Furthermore, the inherent difficulties involved in obtaining the future load make optimal scheduling unattainable.

If the future load in a charging station is unknown, then a forecasting method can be used to improve the scheduling performance by not exceeding the available load at each instant.
For this purpose, load forecasting methods, such as those in \cite{2007-taylor-loadpred-EUdata,2012-taylor-loadpred-exp,2013-taylor-loadpredic}, are required.
Load forecasting was utilized in the current study to enhance the performance and accuracy of the proposed scheduling algorithm.

With the large and growing number of EVs on the road, efficient charging of EV batteries has became an important issue.
Coordinated charging is usually preferred over uncoordinated charging, which is known to adversely affect the power grid by increasing the peak load and total cost, and placing stress on distribution transformers \cite{2013-sha-EVinvest}.
The present study considers a coordinated charging process in which a number of parking stations are under the control of a central aggregator (CA).
A sub-aggregator (SA) is installed in the charging station to communicate with the CA.
The CA is responsible for the charge scheduling of EVs by managing the charging rates and the start and finish times of the charging tasks.
This scheduling is conducted by gathering all the required information, including charging requests, arrival times, and deadlines, transmitted by SAs as inputs of the scheduling algorithm.

According to the previous studies, EV charge scheduling can be formulated as a centralized optimization problem in which the CA handles the decision procedure.
Therefore, in this work, charging cost and user convenience are considered in the objective function of the EV charge scheduling problem for two stakeholders.
More specifically, user convenience is defined based on the charging state of an EV and the total charging time.
Given the inherent trade-off between these two factors, optimizing both objectives simultaneously is impossible. 
Therefore, a solution that is sufficiently good with regard to both objectives is explored with the proposed centralized algorithm.
However, the solution cannot handle situations in which a huge amount of EVs are in the charging station due to the high transmission data rate and computation complexity; hence, it cannot be implemented in practical cases.
Instead of determining all the charging decisions in the CA, the SA in each charging station can help to solve a scheduling problem that will overcome this difficulty \cite{2012-He-EV-selection, 2012-wen-EV-selection, 2017-mal-EV-selection, 2016-shao-EV-mincost}.
Therefore, this work also proposes a low-complexity distributed algorithm with outstanding performance as well as the centralized algorithm.

The main contributions of this study are threefold:
\begin{itemize}

\item A bi-objective optimization problem is formulated to consider both charging cost and user convenience as the main performance factors in the scheduling of EVs.
This problem is considered at the micro-grid scale, which consists of a set of SAs controlled by a CA.
With this formulation, we can attempt to reach a consensus between the two stakeholders.

\item A real-time and centralized scheduling algorithm is proposed to obtain a Pareto-optimal solution for the formulated problem.
A detailed explanation of the Pareto-optimal solution is provided in the Appendix.
With the obtained solution, the average charging time can be reduced without affecting the main objective value.

\item A low-complexity distributed algorithm to reduce the transmission data rate and computation complexity is proposed.
Simulation results indicate that the performance degradation with this algorithm is only $2\%$ compared with the centralized algorithm.
However, while the distributed algorithm can make a decision for numerous EVs under one second, this ability does not apply to the centralized algorithm.
Moreover, the transmission data rate is reduced by $33.25\%$ compared with that of the centralized algorithm in the case study.

\end{itemize}

\section{System Model and Problem Formulation}\label{sec:system_model_problem_formulation}

\subsection{System Model}\label{subsec:system_model}
As illustrated in Fig. \ref{fig:cen_System_model}, we consider a micro-grid (e.g., a university or a town) consisting of $M$ parking stations, each of which has an SA installed, and a CA that controls all the SAs.
A total of $N$ EVs are in the grid.
The charge scheduling problem is studied in a time horizon of $T$ with equal length time slots $t=1, 2,\ldots,T$.
When an EV arrives, its corresponding SA gathers all the required information, which is then sent to CA to provide efficient scheduling for the EVs in all of the parking stations.
For EV $i$, the state of charge (SOC) at time slot $t$ is denoted as ${\tt SOC}_{i, t}$; its range is between $0$ (for the empty battery) and $1$ (for the full battery).
The target SOC, denoted as ${\tt SOC}_{i}^{\rm fin}$, indicates the SOC at the finish time of EV $i$. $a_i$ and $r_{i}$ denote the arrival time and deadline, respectively.
We assume that the actual finish time of an EV can be earlier than its deadline but cannot exceed the user-defined deadline.
We use ${N \times T}$  logical (or $0-1$) matrix $\qF$  to keep track of available EVs in the charging stations for different time slots.
The matrix is defined as follows:
\begin{equation}\label{eq:charging_interval}
 f_{i, t} =
\begin{cases}
1,   &\mbox{EV $i$ is in the station at time $t$}, \\
0,   &\mbox{otherwise}.
\end{cases}
\end{equation}

\begin{figure}
\begin{center}
\resizebox{3in}{!}{%
\includegraphics*{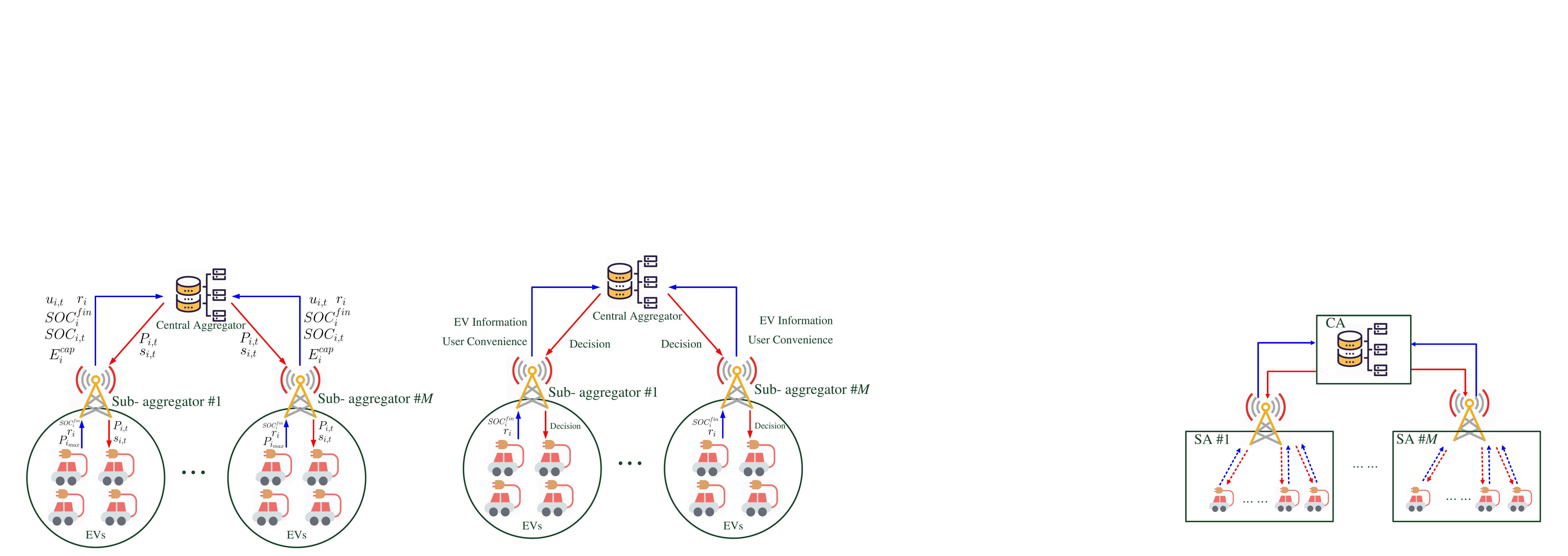} }%
\caption{System model used in this paper. }\label{fig:cen_System_model}
\end{center}
\end{figure}

Given the physical constraints of batteries, the maximum charging rate of EV $i$ is restricted to $P_{i_{\max}}$ in each time slot.
We use $E_{i}^{\rm cap}$, $E_{i}^{\rm ini}$, and $E_{i}^{\rm fin}$ to denote EV $i$'s battery capacity, initial battery energy level at arrival time, and final battery energy level at finish time, respectively.
With these notations, we get $E_{i}^{\rm ini} = {\tt SOC}_{i, a_i}E_{i}^{\rm cap}$.

In time slot $t$, ${\cal H}_{m, t}$ denotes the set of EV indices in the $m$-th charging station.
We use ${\cal H}_{t}= \bigcup^{M}_{m=1} {\cal H}_{m, t}$ to denote the set of EV indices in all charging stations.
Moreover, ${\cal W}_{m, t}$ is the sliding time window of the $m$-th charging station evaluated at time slot $t$; it covers the time slots from $t$ to $t'$, where $t'\geq t$ is the latest deadline of EVs in set ${\cal H}_{m, t}$, i.e., ${\cal W}_{m, t}=\{ t' | t' \geq t ~\&~ t'\leq \max\{r_i | i \in{\cal H}_{m, t} \}\} $.
Similarly, we use ${\cal W}_{t}= \bigcup^{M}_{m=1}  {\cal W}_{m, t} $.

\begin{figure}
\begin{center}
\resizebox{3.2in}{!}{%
\includegraphics*{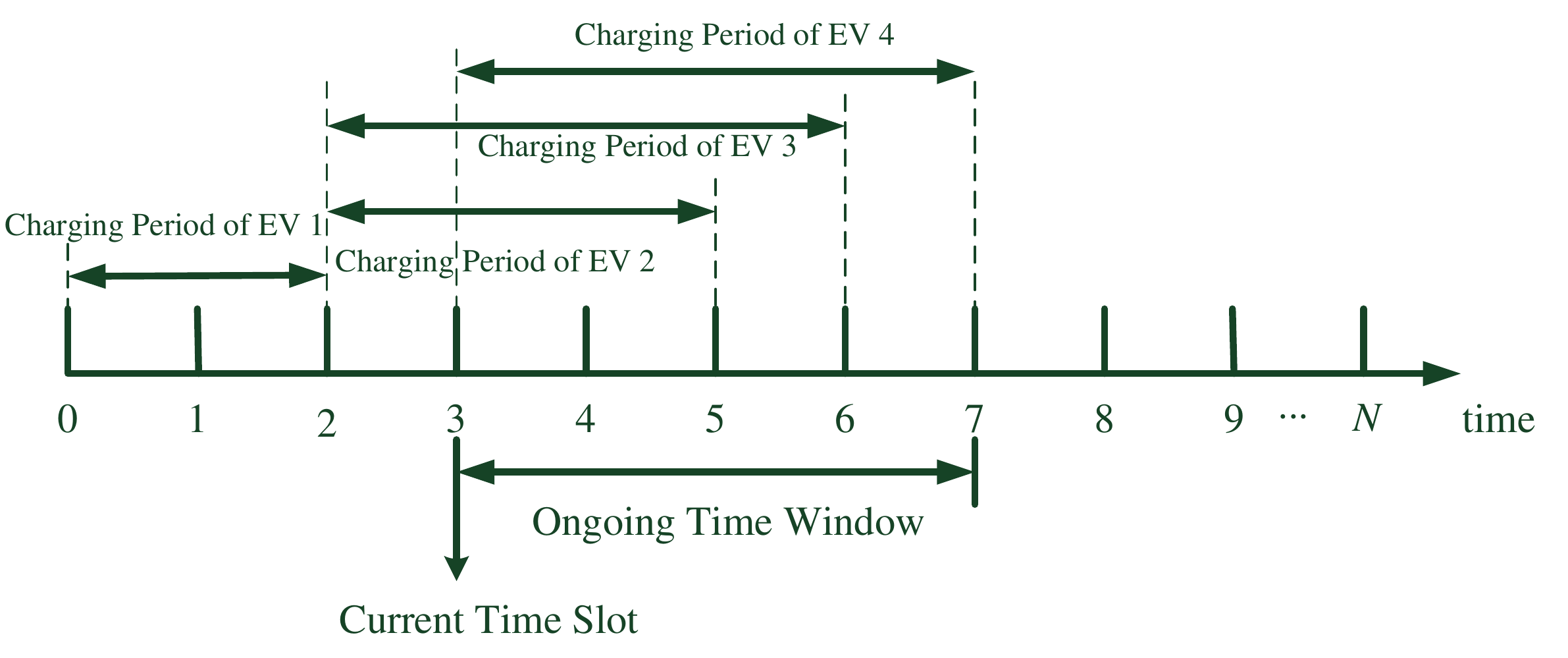} }%
\caption{Illustration of the EV set and the ongoing time window. }\label{fig:EV_time_set}
\end{center}
\end{figure}
Fig.~\ref{fig:EV_time_set} shows an example to provide improved understanding of these notations.
In this example, there are four EVs in the charging stations.
In time slot 3, EV 1 has already left, and EVs 2, 3, and 4 are still in the charging stations.
Therefore, we have ${\cal H}_3 = \{ 2,3,4 \}$. In time slot $t$, the sliding window spans from time slots 3 to 7, i.e., ${\cal W}_3 = \{3, 4, 5, 6, 7 \}$, because
in set ${\cal H}_3$, EV 4 is the last one to leave the station in time slot 7.

\subsection{Charging Cost}
One of the performance factors of interest in this study is the charging cost for the parking station owner.
We adopt the cost function from \cite{2012-He-EV-selection} as follows:
\begin{equation}\label{eq:simple_cost}
	g(z_{t}) = k_{0} + k_{1} z_{t}.
\end{equation}
Function $g(z_{t})$ returns the electricity cost for total load $z_t$ in time slot $t$, where $k_0$ and $k_1$ are constants.
Therefore, the cost for EV charging at time slot $t$ is given by
\begin{equation}\label{eq:int_cost}
	C_{t}\! = \!\int_{L_t^{\rm base}}^{z_{t}}\!\! g(z) \, d z
	       =  k_{0} \Big(z_{t}-L_t^{\rm base}\Big) + \frac{k_{1}}{2} \Big(z_{t}^{2} - {L_t^{\rm base}}^{2}\Big),
\end{equation}
where $L_t^{\rm base}$ is the base load in time slot $t$.
Our first performance metric is as follows:
\begin{equation}\label{eq:obj1}
J_{1}=\sum_{t=1}^{T} C_{t}.
\end{equation}

\subsection{User Convenience}\label{subsec:weight}
To measure the convenience level of users, we define a parameter called user convenience based on a similar definition in \cite{2012-wen-EV-selection}.
The intuition behind user convenience is that EVs need less electricity and that are close to their deadline should have higher charging priority to meet their deadline for their required charges.
Therefore, for EV $i$, we define user convenience as the inverse multiplication of $w^{\star}_{i, t}$ and $w_{i, t}$:
\begin{equation}\label{eq:weight}
u_{i, t} = \frac{1}{ w^{\star}_{i, t} w_{i, t}} .
\end{equation}
Here, the first parameter, $w^{\star}_{i, t}$,  indicates the minimum number of time slots required to finish the charging task of EV $i$ starting from the time slot $t$, which
is defined formally as follows:
\begin{equation} \label{eq:minimum_waiting_time}
w^{\star}_{i, t}   = \frac{ ({\tt SOC}_{i}^{\rm fin} - {\tt SOC}_{i, t} ) E_{i}^{\rm cap}}{P_{i_{\max}}}.
\end{equation}
That is, the charging task of EV $i$ cannot be finished earlier than time slot $t+w^{\star}_{i, t}$ under any feasible scheduling.
The second parameter, $w_{i, t}$, determines the number of remaining time slots to finish the charging task of EV $i$ before its deadline $r_i$:
\begin{equation} \label{eq:parking_time}
w_{i, t} = r_{i} - t .
\end{equation}

A good charging strategy should maximize the sum of all of the users' conveniences.
Therefore, our second performance metric is given by
\begin{equation}\label{eq:obj2}
J_{2}= \sum_{t =1}^{T} \sum_{i = 1}^{N} u_{i, t} .
\end{equation}

\subsection{Problem Formulation}\label{subsec:problem}
We then formulate a bi-objective optimization problem by considering charging cost $J_1$ and user convenience level $J_2$ as follows:
\begin{subequations} \label{eq:total_for}
\renewcommand\minalignsep{0em}
\begin{align}
\mathcal{P} : \quad & \min_{\qz, \qP}\quad \left\{ J_{1}, -J_{2}  \right\} &&    \label{eq:total_obj}\\
\mbox{s.t.} \quad &   L_t^{\rm base} + \sum_{i} P_{i, t} f_{i, t} \leq z_{t} , &&\forall t,  ~\label{eq:total_peak}\\
           &  P_{i_{\min}} \leq P_{i, t}  \leq P_{i_{\max}} ,              &&\forall i, t, \label{eq:total_power_cons}\\
		   &     E_{i}^{\rm ini}+\sum_{t} P_{i, t} f_{i, t} \leq E_{i}^{\rm cap},      &&\forall i, \label{eq:total_no_over}\\
		   &      E_{i}^{\rm fin} \leq E_{i}^{\rm ini} + \sum_{t} P_{i, t} f_{i, t}   ,   &&\forall i. \label{eq:total_meet_target}
\end{align}
\end{subequations}
In constraint (\ref{eq:total_peak}), the summation of the base load and total electricity flowing to the EVs, $L_t^{\rm base} + \sum_{i} P_{i, t} f_{i, t}$, should not be greater than the given total load, $z_{t}$, in time slot $t$.
Constraint (\ref{eq:total_power_cons}) sets the limitation of the charging rate.
Constraints (\ref{eq:total_no_over}) and (\ref{eq:total_meet_target}) ensure that the total electricity in each EV's battery is not greater than that battery's capacity and that it can meet the final charging requirement, respectively.

\section{Online Centralized Solution}\label{sec:algorithm}
Joint minimization of the bi-objective functions (\ref{eq:total_obj}) is impossible because of the inherent trade off between charging cost and user convenience.
In addition, the globally optimal scheduling scheme in (\ref{eq:total_for}) is impractical because the information about EVs that will arrive the charging station in the future is unavailable.
Therefore, we develop an online algorithm to obtain a Pareto-optimal solution for the problem.

\subsection{Problem Transformation}\label{sec:approach}
We solve the problem (\ref{eq:total_for}) by transforming the original formulation into two phases via the lexicographic ordering method.
In the first phase, we ignore user convenience ($J_2$) and identify an optimal solution with respect to electricity cost ($J_1$), by solving the following problem
\begin{subequations}\label{eq:min_cost}
\renewcommand\minalignsep{0em}
\begin{align}
\mathcal{P}_1: \quad &\min_{\qz, \qP}  \quad  \sum_{t \in {\cal W}_{t}} C_{t} && \label{P1} \\
\mbox{s.t.} \quad
&               L_{t}^{\rm base} + \sum_{i} P_{i, t} f_{i, t} \leq z_{t} , && t\in {\cal W}_{t}, i\in {\cal H}_{t},  \label{eq:min_cost_1} \\
&               P_{i_{\min}} \leq P_{i, t}  \leq P_{i_{\max}} ,          && t\in {\cal W}_{t}, i\in {\cal H}_{t}, \label{eq:min_cost_2} \\
&		        E_{i}^{\rm ini}+\sum_{t} P_{i, t} f_{i, t}\leq E_{i}^{\rm cap},   && t\in {\cal W}_{t}, i\in {\cal H}_{t}, \label{eq:min_cost_3}\\
&		        E_{i}^{\rm fin} \leq E_{i}^{\rm ini} + \sum_{t} P_{i, t}f_{i, t}    , && t\in {\cal W}_{t}, i\in {\cal H}_{t}. \label{eq:min_cost_4}
\end{align}
\end{subequations}
In (\ref{eq:min_cost}), the charging rate and load value should be determined to minimize the charging cost.
By solving $\mathcal{P}_1$, we obtain information on optimal load values $z^\star_t$ that can minimize the charging cost.
Given the optimal load values $z^\star_t$, we may have multiple solutions for $P_{i, t}$.
In other words, as long as we fix $z^\star_t$, we can re-schedule the EVs' charging time without changing the charging cost.

In the second phase, we take $z^\star_t$ from Problem $\mathcal{P}_1$ and solve the following  problem.
\begin{subequations}\label{eq:max_us}
\renewcommand\minalignsep{0em}
\begin{align}
\mathcal{P}_2: \quad & \max_{\qP}  \quad \sum_{ t \in {\cal W}_{t}} \sum_{i \in {\cal H}_{t}} u_{i, t}  &&  \label{P2} \\
\mbox{s.t.} \quad
&               L_{t}^{\rm base}  + \sum_{i} P_{i, t} f_{i, t} \leq  z^\star_t ,  &&   t\in {\cal W}_{t}, i\in {\cal H}_{t},  \\
&               (\ref{eq:min_cost_2}), (\ref{eq:min_cost_3}), (\ref{eq:min_cost_4}).
\end{align}
\end{subequations}
Given that $z^\star_t$'s are fixed, the feasible solutions of $\mathcal{P}_2$ are optimal solutions of $\mathcal{P}_1$.
Therefore, among the feasible solutions of Problem $\mathcal{P}_1$, we maximize user convenience.
An optimal solution for $\mathcal{P}_2$ is also an optimal solution for $\mathcal{P}_1$, so this solution is a Pareto-optimal solution for problem (\ref{eq:total_for}).

\subsection{Scheduling Algorithm Description}

Subsequently, we develop an algorithm based on the concept proposed in Section \ref{sec:approach}.
To solve the problem $\mathcal{P}_1$, the algorithm still requires future base load information.
There are many forecasting techniques where can be applied; however, several of them require a large amount of data to process.
In this study, we use the seasonal ARIMA because it is a well-developed and stable algorithm.
If correlation exists between data, this approach demonstrates good performance based on the forecasting result.
Moreover, seasonal ARIMA applies different prediction models and can be found in References \cite{2007-taylor-loadpred-EUdata} and \cite{2013-taylor-loadpredic} in detail.

The core algorithm for solving Problem $\mathcal{P}$ is listed in Algorithm \ref{ago:EV_sel}.
In lines 3--6 of Algorithm \ref{ago:EV_sel}, the required parameters are collected and prepared.
In line 7, CA solves Problem $\mathcal{P}_1$ to obtain optimal charging load value, $z_t^{\star}$, by using the interior-point method\cite{boyd-cvx-book}.
The available power for charging in time slot $t$ is then obtained as follows:
\begin{equation}\label{eq:avai}
L_t^{\rm char}=z_t^{\star} - L_t^{\rm base}.
\end{equation}

By determining the value of $L_t^{\rm char}$ in each time slot, Algorithm \ref{ago:EV_sel} employs a sub-procedure for user convenience maximization (UCM) described in Algorithm  \ref{ago:UCM} to schedule the EVs' charging rate.
To maximize user convenience, all the available power will be used to charge EVs, and EVs with high user convenience values are charged with maximum charging rate.
Specifically, we sort the EVs in ${\cal H}_t$ based on their user convenience, $u_{i, t}$, in a decreasing order.
Next, we examine the sorted list and select as many as possible EVs  to charge in time slot $t$ with their maximum charging rate (except probably the last selected EV), such that the total charging of the selected EVs is equal to the available load $L_t^{\rm char}$.

\begin{algorithm}
%\footnotesize
\caption{EV Charge Scheduling Algorithm (CSA)}
\label{ago:EV_sel}
 \DontPrintSemicolon
\KwIn{$N$ EVs with their information}
\KwOut{A feasible scheduling for Problem $\mathcal{P}$}
Forecast the base load by the seasonal ARIMA;

\For{t  = 1 \KwTo T }{
   Determine ${\cal H}_t$ and ${\cal W}_t$ of the current time slot $t$\;
   Construct $\qF$ based on (\ref{eq:charging_interval})\;
   All SAs use (\ref{eq:weight}) to determine the user convenience \;
   $E_{i}^{\rm ini} \leftarrow {\tt SOC}_{i, t} E_{i}^{\rm cap}$ \;
   CA solves Problem $\mathcal{P}_1$ using interior-point methods in \cite{boyd-cvx-book} to obtain $z_{t}^\star$\;
   $L_t^{\rm char} \leftarrow z_{t}^\star - L_t^{\rm base}$\;
   Use ${\rm UCM}$ to solve $\mathcal{P}_2$ \;
    Use (\ref{eq:SOC_update}) to update SOC information \;
   }
\end{algorithm}

\begin{algorithm}
%\footnotesize
\caption{Algorithm UCM$(L_t^{\rm charg})$}
\label{ago:UCM}
 \DontPrintSemicolon
\KwIn{$L_t^{\rm charg}$, $t$ }
\KwOut{Charging decisions for time slot $t$}

Sort EVs in ${\cal H}_t$ based on their user convenience with decreasing order as $e_1,e_2,\dots ,e_{|{\cal H}_t|}$\;
$L_{\textrm{remain}}\leftarrow L_t^{\rm charg}$ \;

$i=1$\;

\While{$L_{\textrm{remain}}>0\ \wedge\ i\leq |{\cal H}_t|$}{
$\Delta=\min \{L_{\textrm{remain}},P_{i_{\max}}\}$

Charge $e_i$ with the rate of $\Delta$

$L_{\textrm{remain}}\leftarrow L_{\textrm{remain}}-\Delta$

$i\leftarrow i+1$}
\end{algorithm}

When the charging decisions are made in time slot $t$, the SOC information needs to be updated as follows:
\begin{equation}\label{eq:SOC_update}
{\tt SOC}_{i, t+1} = {\tt SOC}_{i, t} + \frac{P_{i, t}}{E_{i}^{\rm cap}}.
\end{equation}
If ${\tt SOC}_{i, t+1}$ reaches the target SOC, then the charging task of EV $i$ is done, and time slot $t+1$ is considered  the finish time of the EV.
The proposed algorithm thus determines, the Pareto-optimal solution; the proof is provided in Appendix \ref{subsec:proof_optimal}.

\section{Low-Complexity Distributed Algorithm}\label{sec:Low_com_dis_alg}
In Algorithm \ref{ago:EV_sel}, the CA handles all decision procedures.
However, the computation complexity of implementing the CSA will be exponential growing with the increase of the amount of EVs.
In addition, a CSA typically requires SAs to send all of the information to the CA leading to the heavy transmission data rate.
We therefore develop a low-complexity distributed algorithm in which the transmission data rate can also be reduced.

\subsection{Low-Complexity Charging-Cost Minimization Algorithm}

In Algorithm \ref{ago:EV_sel}, $\mathcal{P}_1$ is solved by using the interior-point method \cite{boyd-cvx-book}.
The computational complexity of this method increases dramatically as the number of EVs in the charging station increases.
A low-complexity algorithm can be derived by the following lemma and with the usage of SAs.

\begin{Lemma}
If all constraints  are removed from (\ref{eq:min_cost}), then an optimal solution for $\mathcal{P}_1$ should satisfy
\begin{equation}
 \sum_{i\in{\cal H}_{t}}  P_{i, t} = \max\big\{c - L_t^{\rm base}, 0 \big\}, ~\forall t,
\end{equation}
where $c= \sum_{t\in{\cal W}_{t}}{\left(\sum_{i\in{\cal H}_{t}}  P_{i, t} + L_t^{\rm base}\right)}/ |{\cal W}_{t}|$ is the average total load, and $|{\cal W}_{t}|$ represents the cardinality of ${\cal W}_{t}$.
\end{Lemma}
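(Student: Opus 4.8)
The plan is to reduce $\mathcal{P}_1$ to a one–dimensional water‑filling problem in the \emph{aggregate} charging powers and then read off the stated formula from its KKT conditions. Throughout, write $S_t=\sum_{i\in{\cal H}_t}P_{i,t}$ for the total charging power in slot $t$, and keep in mind that even after the per‑EV/per‑slot box constraints are dropped the total energy to be delivered over the window, $D=\sum_{t\in{\cal W}_t}S_t$, is still fixed by the charging requirement, and $S_t\ge 0$.

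First I would note that at any optimum the coupling constraint (\ref{eq:min_cost_1}) is tight, i.e.\ $z_t=L_t^{\rm base}+S_t$, since $g$ is increasing in $z$ and hence $C_t$ is strictly increasing in $z_t$; so $\mathcal{P}_1$ is equivalently a minimization over $(S_t)_{t\in{\cal W}_t}$ alone. Substituting $z_t=L_t^{\rm base}+S_t$ into $\sum_{t\in{\cal W}_t}C_t$ and using $\sum_{t\in{\cal W}_t}z_t=\sum_{t\in{\cal W}_t}L_t^{\rm base}+D$ (a constant), every term linear in $z_t$ and every term depending only on $L_t^{\rm base}$ becomes constant, so the objective equals $\mathrm{const}+\tfrac{k_1}{2}\sum_{t\in{\cal W}_t}z_t^2$. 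Thus, assuming $k_1>0$, $\mathcal{P}_1$ is equivalent to minimizing the strictly convex function $\sum_{t\in{\cal W}_t}z_t^2$ over the nonempty compact polytope $\{(z_t):z_t\ge L_t^{\rm base}\ \forall t,\ \sum_{t\in{\cal W}_t}z_t=\sum_{t\in{\cal W}_t}L_t^{\rm base}+D\}$; a minimizer exists, is unique, and the KKT conditions are necessary and sufficient.

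Next I would form the Lagrangian $\tfrac12\sum_t z_t^2-\lambda\bigl(\sum_t z_t-\mathrm{const}\bigr)-\sum_t\nu_t(z_t-L_t^{\rm base})$ with $\nu_t\ge 0$ and $\nu_t(z_t-L_t^{\rm base})=0$. Stationarity gives $z_t=\lambda+\nu_t$. If $z_t>L_t^{\rm base}$ then $\nu_t=0$, so $z_t=\lambda$; if $z_t=L_t^{\rm base}$ then complementary slackness forces $\nu_t=L_t^{\rm base}-\lambda\ge 0$, i.e.\ $\lambda\le L_t^{\rm base}$. In either case $z_t=\max\{\lambda,L_t^{\rm base}\}$, equivalently $\sum_{i\in{\cal H}_t}P_{i,t}=\max\{\lambda-L_t^{\rm base},0\}$. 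To identify $\lambda$ with $c$: in the regime the lemma addresses, where the base‑load constraints are inactive, $z_t=\lambda$ for every $t$, so summing over ${\cal W}_t$ and dividing by $|{\cal W}_t|$ yields $\lambda=\tfrac{1}{|{\cal W}_t|}\sum_{t\in{\cal W}_t}z_t=\tfrac{1}{|{\cal W}_t|}\sum_{t\in{\cal W}_t}\bigl(\sum_{i\in{\cal H}_t}P_{i,t}+L_t^{\rm base}\bigr)=c$, which is exactly the claimed expression.

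The step I expect to require the most care is pinning down what ``removing all constraints'' leaves behind: the aggregate energy balance $\sum_{t\in{\cal W}_t}S_t=D$ must be retained (otherwise the relaxed objective is driven to zero and the statement is vacuous), and nonnegativity $S_t\ge 0$ is precisely what produces the $\max\{\cdot,0\}$. A related subtlety is the self‑referential definition of $c$: the reading ``$c=$ average total load'' is valid verbatim only when no base‑load constraint is active; more generally $c$ should be understood as the water‑filling level $\lambda$ determined implicitly by $\sum_{t\in{\cal W}_t}\max\{\lambda-L_t^{\rm base},0\}=D$, and one should remark that $\sum_{i\in{\cal H}_t}P_{i,t}=\max\{c-L_t^{\rm base},0\}$ still holds in that case. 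The remaining details are routine convex‑optimization bookkeeping.
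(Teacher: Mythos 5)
Your proof is correct, and it reaches the same flat-profile conclusion as the paper but by a different and in fact more complete route. The paper's argument substitutes $z_t=L_t^{\rm base}+\sum_{i}P_{i,t}$ into the convex per-slot cost and invokes Jensen's inequality, concluding that the sum of convex costs subject to a fixed total is minimized when all the arguments are equal; its final displayed solution is simply $\sum_i P_{i,t}=c-L_t^{\rm base}$ with no $\max\{\cdot,0\}$ and no discussion of what happens when that quantity would be negative. You instead reduce to the aggregate variables, observe that the quadratic objective collapses to $\mathrm{const}+\tfrac{k_1}{2}\sum_t z_t^2$ once the energy-balance constraint fixes $\sum_t z_t$, and run the KKT conditions of the resulting water-filling problem. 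These are two faces of the same convexity fact, but your version buys three things the paper's does not: (i) an explicit derivation of the $\max\{c-L_t^{\rm base},0\}$ clamp from the nonnegativity of the aggregate charging power, which is the only way the lemma's stated formula can be justified; (ii) an explicit acknowledgment that the total-energy constraint must survive the "remove all constraints" relaxation, since otherwise the minimum is trivially attained at zero charging; and (iii) the observation that the paper's closed-form $c$ (the average total load) coincides with the water-filling level only when no base-load constraint is active, and that in general $c$ is defined implicitly by $\sum_{t}\max\{c-L_t^{\rm base},0\}=D$. The Jensen route is shorter and arguably more elegant for the unconstrained-interior case, but your KKT treatment is the one that actually proves the lemma as stated.
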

\begin{proof}
The proof is in Appendix \ref{subsec:proof_optimal_cost}.
\end{proof}

In the case of Lemma 1, the optimal solution is a flat profile, i.e., $\sum_{i\in{\cal H}_{t}}  P_{i, t} + L_t^{\rm base}$ is a constant for all $t$.
Then, considering the charging requirement constraints, (\ref{eq:min_cost_3}) and (\ref{eq:min_cost_4}), the total power used for charging should meet the demand shown as follows
\begin{equation}\label{eq:total_demand}
\sum_{t\in{\cal W}_{t}} \sum_{i\in{\cal H}_{t}} P_{i, t}  = \sum_{i\in{\cal H}_{t}}  ({\tt SOC}_{i}^{\rm fin} - {\tt SOC}_{i, t})E_{i}^{\rm cap} .
\end{equation}
However, if power constraint (\ref{eq:min_cost_2}) is active, then the flat profile solution does not hold because $c$ cannot exceed the summation of the maximum charging rate of each EV in each time slot.
The number of EVs will change in some time slots so the summation of the maximum charging rate shall vary.
The portion exceeds the summation of charging limitation must be redistributed to other time slots having EVs whose deadlines are earlier than the current time window.
Moreover, the portion doesn't exceed the limitation should remain flat after redistribution in order to reach the minimum cost based on Lemma 1.

Based on the previous discussion, we design an algorithm called low-complexity charging cost minimization (LCCM) algorithm to obtain the solution of $\mathcal{P}_1$.
First, the $m$-th SA calculates time window ${\cal W}_{m, t}$ and total demand
\begin{equation}\label{eq:demand}
d_{m} = \sum_{i \in {\cal H}_{m, t}} ({\tt SOC}_{i}^{\rm fin} - {\tt SOC}_{i, t})E_{i}^{\rm cap},
\end{equation}
then all SAs send ${\cal W}_{m, t}$'s and $d_{m}$'s to CA.
However, CA does not know the exact charging limitation of each EV.
Therefore, CA constructs the time window denoted as $\overline{\cal W}_{t}$ with the intersection of ${\cal W}_{m, t}$
\begin{equation}
\overline{\cal W}_{t} = \bigcap_{m=1}^{M} {\cal W}_{m, t}
\end{equation}
to ensure that these periods have the most number of EVs.
Next, we distribute $d_{m}$ over $\overline{\cal W}_{t}$ based on Lemma 1 as
\begin{equation}\label{eq:lemma1_solution}
c = \frac{ \sum_{m=1}^{M}d_{m} + \sum_{t\in \overline{\cal W}_{t}} L_t^{\rm base}  }{|\overline{\cal W}_{t}|},
\end{equation}
where $|\overline{\cal W}_{t}|$ is the cardinality of $\overline{\cal W}_{t}$.
Finally, $c$ is assigned to $z_{t}^\star$ as the solution of the load value with minimum charging cost.
Because we can ensure there are more EVs staying in the stations at the end of $\overline{\cal W}_{t}$ than the end of ${\cal W}_{t} $, and we distribute the total demand over $\overline{\cal W}_{t}$.
Therefore, the problem of exceeding the summation of the maximum charging rate can be avoided.

\subsection{Low-Complexity Distributed Algorithm Description}

The distributed charge scheduling algorithm (DCSA) listed in Algorithm \ref{ago:dis_EV_sel} also works in two phases.
The first phase still deals with the problem $\mathcal{P}_1$.
In the beginning, the SAs prepare the required parameters and send them to the CA.
Next, the CA applies LCCM algorithm to determine $z_{t}^\star$.
With $z_{t}^\star$ known, the available power can be obtained by (\ref{eq:avai}) and then broadcasted to all SAs.

\begin{algorithm}
%\footnotesize
\caption{Distributed Charge Scheduling Algorithm (DCSA)}
\label{ago:dis_EV_sel}
 \DontPrintSemicolon
\KwIn{$N$ EVs with their charging information}
\KwOut{A feasible scheduling for Problem $\mathcal{P}$}
Forecast the base load and broadcast to all SAs\;

\For{t  = 1 \KwTo $T$ }{
   SAs determine ${\cal H}_{m, t}$, ${\cal W}_{m, t}$ and $d_{m}$ based on time $t$\;
   SAs send $d_{m} $ and ${\cal W}_{m, t}$ to CA \;
   CA solves Problem $\mathcal{P}_1$ by applying Algorithm LCCM \;
   Broadcast $L_t^{\rm char}$ to all SAs \;
   All SAs use (\ref{eq:weight}) to determine the user convenience and sort them with decreasing order\;
   Use ${\rm DUCM}$ to solve $\mathcal{P}_2$ \;
   Use (\ref{eq:SOC_update}) to update SOC information \;
   }
\end{algorithm}

The algorithm then enters to the second phase when SAs receive the available power from CA.
In the second phase, the SAs have to determine which EVs should be charged through coordination among themselves without any communication from CA.
Motivated by UCM, the smallest value of user convenience should be identified so that EVs with user convenience larger than the smallest value can be charged with the maximum charging rate.
To this end, the algorithm called distributed user convenience maximization (DUCM) provided in Algorithm \ref{ago:dis_EV_sel} and Fig.~\ref{fig:DUCM_model} is designed using bisection method.

\begin{figure}
\begin{center}
\resizebox{3.5in}{!}{%
\includegraphics*{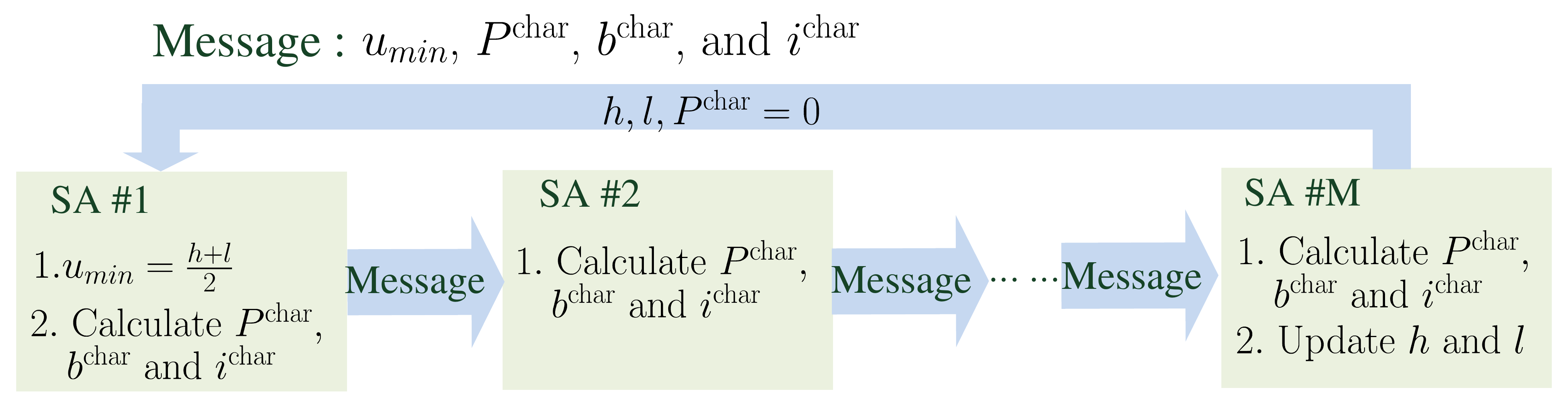} }%
\caption{Iteration process of DUCM}\label{fig:DUCM_model}
\end{center}
\end{figure}

At the beginning, the first SA has to decide the smallest value of the user convenience, $u_{\min}$, by averaging the given upper bound, $h$,  (set to $1$) and the given lower bound, $l$, (set to $0$) of user convenience level.
Every SA then passes the message including $u_{\min}$, $P^{\rm char}$, $b^{\rm char}$, and $i^{\rm char}$ to the next SA.
Here, $P^{\rm char}$ is the summation of the charging power for the EV with user convenience greater than $u_{\min}$,
$b^{\rm char}$ is the largest user convenience under $u_{\min}$, and $i^{\rm char}$ is the corresponding EV number.
Specifically, the three variables are calculated by
\begin{subequations}\label{eq:DUCM_parameter}
\begin{align} 
P^{\rm char} &:= \Big\{\!  P^{\rm char} + \sum_{i} P_{i_{\max}} \Big| i \in {\cal H}_{m, t}, \,u_{i, t} >u_{\min} \!\Big\}, \\
b^{\rm char} &:= \left\{  \max_{i} (u_{i, t}, b^{\rm char}) \Big| i \in {\cal H}_{m, t}, \, u_{i, t} < u_{\min} \right\}, \\
i^{\rm char} &:= \left\{  i | i \in {\cal H}_{m, t}, \, u_{i, t} = b^{\rm char} \right\}.
\end{align}
\end{subequations}
The last SA has to decide new upper or lower bounds based on
\begin{equation} \label{eq:DUCM_new_bound}
\left\{
\begin{array}{ll}
                h \leftarrow u_{\min},      & \mbox{if}~~ L_t^{\rm char} - P^{\rm char}  > P_{i_{\max}},  \\
                l \leftarrow u_{\min},      & \mbox{if}~~ L_t^{\rm char} - P^{\rm char}  < 0,
\end{array}
        \right.
\end{equation}
and send them to the first SA.
Following the iteration process, the smallest value of user convenience level is obtained.
EVs with user convenience larger than $u_{\min}$ are then charged at the maximum charging rate, and the residual power is assigned to EV $ i^{\rm char}$.

The SOC information can be updated with the charging decision using (\ref{eq:SOC_update}).
If ${\tt SOC}_{i, t+1}$ reaches the target SOC, then $t+1$ is considered the finish time of EV $i$.

\begin{algorithm} 
%\footnotesize
\caption{Algorithm DUCM $(L_t^{\rm char})$}
\label{ago:DUCM}
 \DontPrintSemicolon
\KwIn{$t$, $L_t^{\rm char}$, ${\cal H}_{m, t}$ }
\KwOut{Charging decisions for time slot $t$}
Set upper bound $h = 1$ and lower bound $l=0$ \;

\While {$|h-l| \geq \epsilon$ }{

 first SA: let $ u_{\min} \leftarrow  \frac{1}{2} (h+l)$. Find $ P^{\rm char}$, $b^{\rm char}$, and $i^{\rm char}$ based on (\ref{eq:DUCM_parameter}), and then broadcast to $2^{nd}$ SA \;

\For{k  = 2 \KwTo $M$ }{

	\If{ $k=M$ }{
	Use (\ref{eq:DUCM_new_bound}) to decide the new $h$ and $l$ \;
	Broadcasts $h$, $l$, $P^{\rm char}=0$ to first SA
	}	
	\Else{
	 kth SA finds $u_{\min}$, $P^{\rm char}$, $b^{\rm char}$, and $i^{\rm char}$ based on (\ref{eq:DUCM_parameter}) and broadcasts to (k+1)th SA \;
	 }
}

}

EV with $u_{i, t}$ bigger than $u_{\min}$ is charged with $P_{i_{\max}}$ and EV $i^{\rm char}$ is charged with $(L_t^{\rm char} - P^{\rm char})$
\end{algorithm}

\section{Simulation Results}\label{sec:simulation}
We conduct experiments to evaluate the performance of the proposed algorithms.
Unless otherwise specified, the simulation settings are as follows. We consider a CA that controls six SAs with a size of $ [ N_{1} \, N_{2} \, N_{3} \, N_{4} \, N_{5} \,N_{6}]/N = [5\% ~ 10\% ~  15\% ~ 15\% ~ 20\% ~ 35\% ] $.
Each SA is installed in a parking station.
The total number of EVs is $N$ in the grid, and they are randomly assigned to one of the six parking stations.
The EVs used in the simulation are all Nissan Leaf 2016, each with battery capacity of $30$ kWh and maximum charging rate of $6.6$ kW.
For the pricing model in (\ref{eq:simple_cost}), the constants $k_{0}$ and $k_{1}$ are respectively set to  $10^{-4}$ C\$ and $1.2\times 10^{-4}$ C\$/kWh, as similar setting in \cite{2012-He-EV-selection}.
The time horizon is divided into 96 time slots with a length of 15 minutes to represent a 24-hour period.
The EV arrival times and deadlines are generated randomly between ${18:00}$ and ${07:00}$.
The initial SOC values are randomly and uniformly generated from the interval $[0,1]$, and the target SOC is set to $1$.
The base load information is based on the result of the load forecasting technique. 
We change the unit of the base load to 
\begin{equation}
L_t^{\rm base} = \frac{ L_{t}^{\rm fore} \times L^{\rm peak} } {  \max_{t}(L_{t}^{\rm fore}) }  , 
\end{equation}
where $L^{\rm peak}$ and $L_{t}^{\rm fore} $ are the peak load of a day and the forecasted load in time slot $t$, respectively.
The peak load settings under different numbers of EVs are listed in Table \ref{tb:charging_cost_setting}.
The peak constraint denotes the limitation of $z_{t}$, which is employed for comparison with that in \cite{2012-wen-EV-selection}.
The forecasting error is based on the mean absolute percentage error (MAPE) as follows:
\begin{equation} \label{eq:MAPE}
 {\rm MAPE} = \frac{1}{T} \sum_{t=1}^{T} \left| \frac{L_{t}^{\rm fore} - L_{t}^{\rm act} } { L_{t}^{\rm act} } \right|,
\end{equation}
where $L_{t}^{\rm act}$ is the actual load in time slot $t$.
The simulations for computation speed were conducted with an Intel i7-3770 computer with 3.4\,GHz CPU and 8\,GB RAM.
The error tolerance level for DUCM is set to $10^{-4}$.
All of the simulation results were collected from $100$ Monte Carlo simulations.

\begin{table}\footnotesize
\renewcommand{\arraystretch}{1.1}
\begin{center}
\caption{The parameter setting for different number of EVs}\label{tb:charging_cost_setting}
\begin{tabular}{|C{3.15cm}|C{0.5cm}|C{0.6cm}|C{0.6cm}|C{0.6cm}|C{0.75cm}|c|c|c|}
\hline
EV Number        & 100 & 200 & 300 & 400 & 2000 \\
\cline{1-6}
$L^{\rm peak} $  (Kw)       & 400 & 800 &  1200     &   1600   &  8000 \\
\cline{1-6}
Peak Constraint (Kw) & 800 & 1200 & 1600      &  2000    &  11000 \\
\hline
\end{tabular}
\end{center}
\end{table}

In the simulations, we compare four algorithms, namely CSA (the proposed centralized method), DCSA  (the proposed low-complexity distributed method), the cost minimization algorithm in \cite{2012-He-EV-selection}, and the user convenience maximization algorithm in \cite{2012-wen-EV-selection}.
To solve the optimization problem in \cite{2012-He-EV-selection} and $\mathcal{P}_1$ in CSA, we use CVX \cite{CVX}, a package intended to solve convex programs.

\subsection{Forecasting Accuracy}

We compare the forecasting method (seasonal ARIMA method in \cite{2007-taylor-loadpred-EUdata}) that we used in our scheduling algorithm with the previous day average method used in \cite{2012-He-EV-selection}.
We use the actual load information in France between 11/30/2015 and 12/16/2015 \cite{Load_data} to forecast the load from 12/17 to 12/18 noon.
As shown in Fig.~\ref{fig:load_1217_1218}, the ARIMA method forecasts the future load with high accuracy.
The forecasting errors for seasonal ARIMA and previous days' average method used in \cite{2012-He-EV-selection} are $1.79 \%$ and $4.52\%$, respectively. The result indicates a $2.73 \%$  improvement in the ARIMA method.

\begin{figure}
\begin{center}
\resizebox{2.8in}{!}{%
\includegraphics*{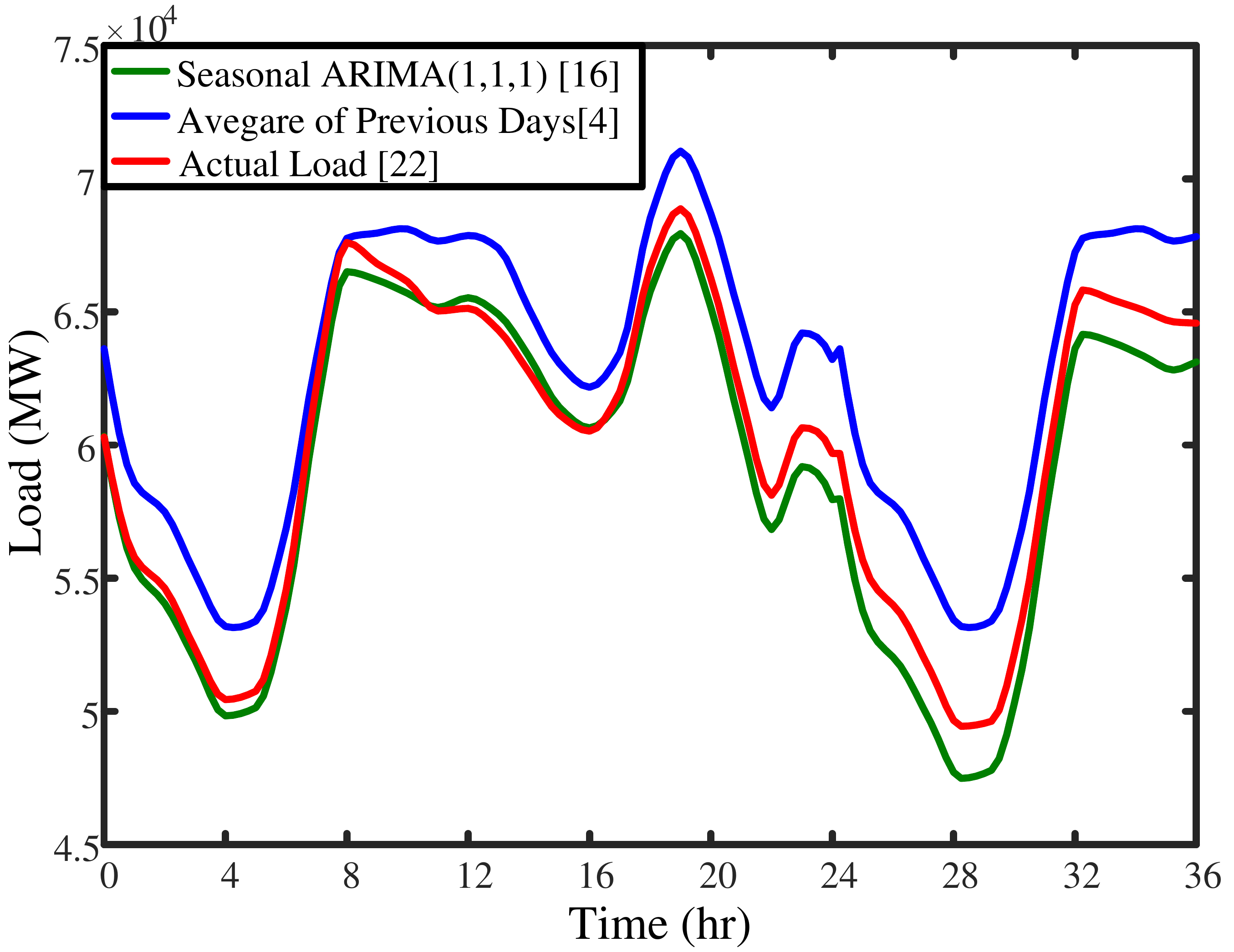} }%
\caption{Load forecasting result for ARIMA \cite{2007-taylor-loadpred-EUdata} and the average from 11/30 to 12/16 \cite{2012-He-EV-selection}.}\label{fig:load_1217_1218}
\end{center}
\end{figure}

\subsection{Evaluation in Terms of Charging Cost}

We evaluate the charging costs under different numbers of EVs in the charging stations.
The corresponding results for different algorithms are shown in Fig.~\ref{fig:charging_cost_compare}, where the charging costs are normalized. Among the algorithms, the method in \cite{2012-wen-EV-selection} has a much higher charging cost because this algorithm only considers user convenience and thus uses as much power as possible to reach the maximum user convenience.
The charging cost is considered in the three other algorithms, and so the charging cost is much lower than the method in \cite{2012-wen-EV-selection}.
The cost difference between the method in \cite{2012-He-EV-selection} and CSA is due to forecasting accuracy.
Meanwhile, a $2\%$ difference in cost is observed between CSA and DCSA.
This is due to the fact that the algorithm LCCM used in DCSA provides a sub-optimal solution under the limitations of constraints.
However, if there are more than $2,000$ EVs are in the charging stations, the charging cost of CSA would be unavailable due to the high computational complexity.

\begin{figure}
\begin{center}
\resizebox{2.7in}{!}{%
\includegraphics*{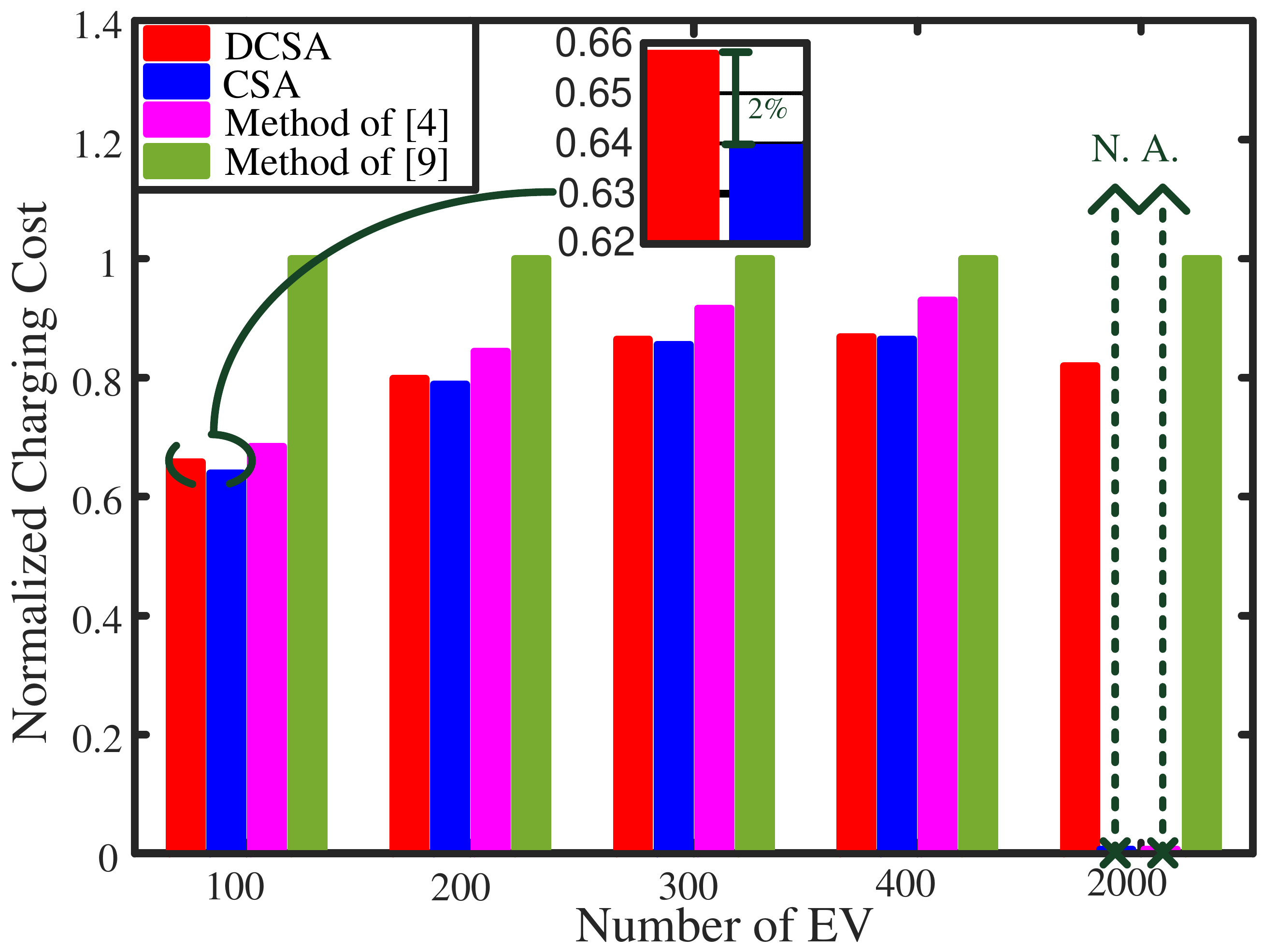} }%
\caption{Charging cost comparison under different methods.}\label{fig:charging_cost_compare}
\end{center}
\end{figure}

\subsection{Evaluation in Terms of Charging Time}

As mentioned in Section \ref{subsec:weight}, the proposed user convenience metric can effectively reduce charging time.
Therefore, the four described algorithms are compared in terms of the average charging time when the total number of EVs is $200$ EVs.
Charging time is defined as the difference between the arrival time and the finish time for each EV.
A fixed peak constraint has to be satisfied in all of the algorithms.
The comparison of the charging time is provided in Fig.~\ref{fig:EV200_waiting_time_compare}.

According to Fig.~\ref{fig:EV200_waiting_time_compare}, the average charging time for CSA is $9.08$ hours and it is $8.88$ hours for DCSA.
However, the average charging times of cost minimization algorithm and user-convenience maximization algorithm are $12.74$ and $12.88$ respectively.
Hence, the two proposed methods show a significant improvement of $30\%$ compared to the other two methods.
This result indicates the effectiveness of the proposed user convenience metric in (\ref{eq:weight}).
A gap exists between DCSA and CSA because finding the charging cost with DCSA is higher than with CSA; hence, the charging time is shorter.
Indeed, as explained in the previous subsection, the charging cost of using DCSA is slightly higher than those incurred with CSA.
The average charging time of the methods in \cite{2012-He-EV-selection} and \cite{2012-wen-EV-selection} are very close to each other; neither of the two methods has a specific strategy to reduce the charging time.

\begin{figure}
\begin{center}
\resizebox{3in}{!}{%
\includegraphics*{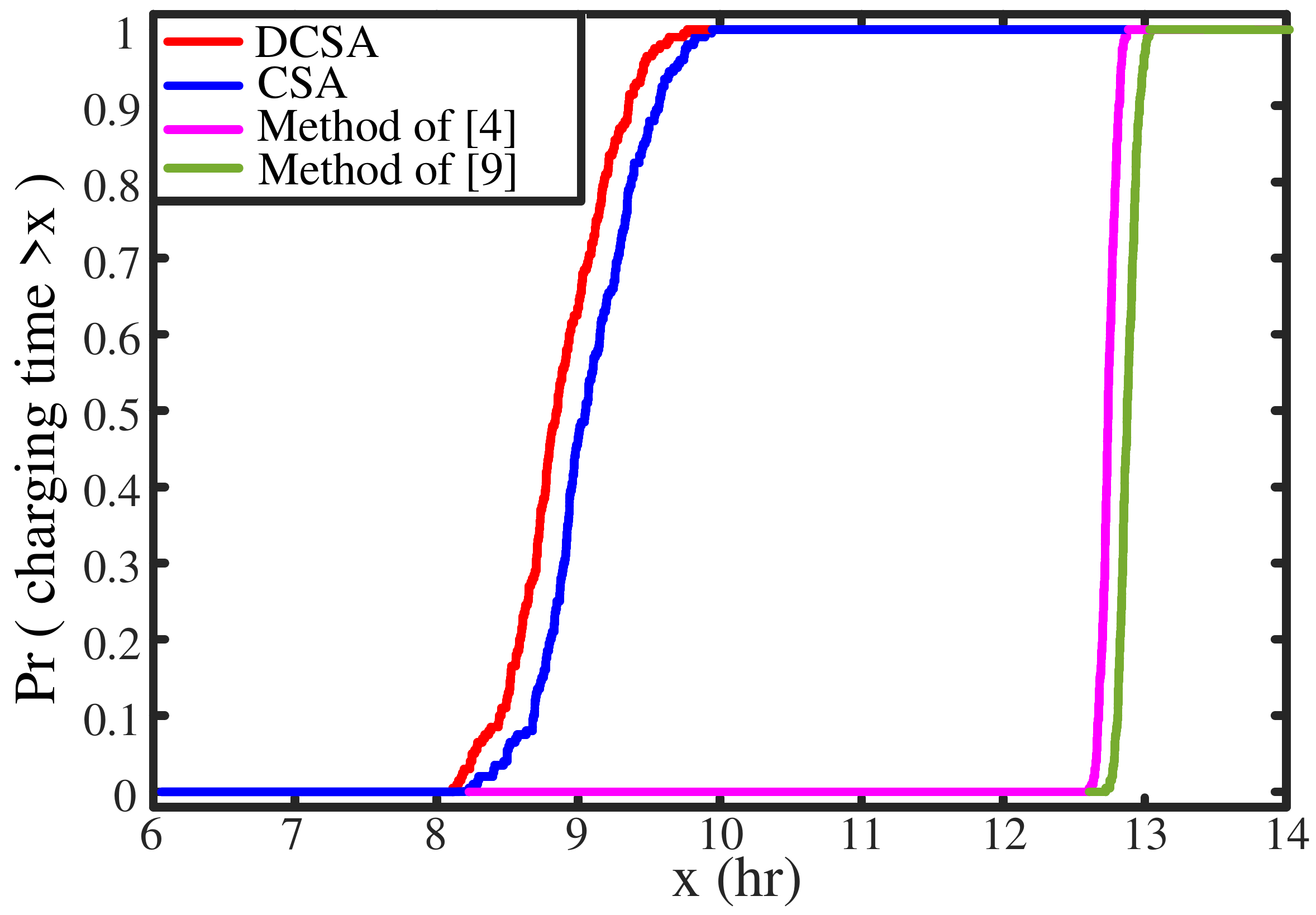} }%
\caption{Cumulative distribution function of the average charging time of different algorithms.}\label{fig:EV200_waiting_time_compare}
\end{center}
\end{figure}

The charging profile of the $10$-th EV in the charging station is provided in Fig.~\ref{fig:charge_profile_compare}.
In line with the proposed user convenience metric, this EV is selected for charging with the maximum charging rate and charging takes only a few time slots for both CSA and DCSA; hence, the charging time is reduced.
The method in \cite{2012-He-EV-selection} distributes the demand over the charging period, and so the charging task is finished when the time reaches the deadline.
The charging profile of the method in \cite{2012-wen-EV-selection} is periodical because its designed incorporation of user convenience.
Consequently, the proposed user convenience metric can consider the charging time in the formulation.

\begin{figure}
\begin{center}
\resizebox{3in}{!}{%
\includegraphics*{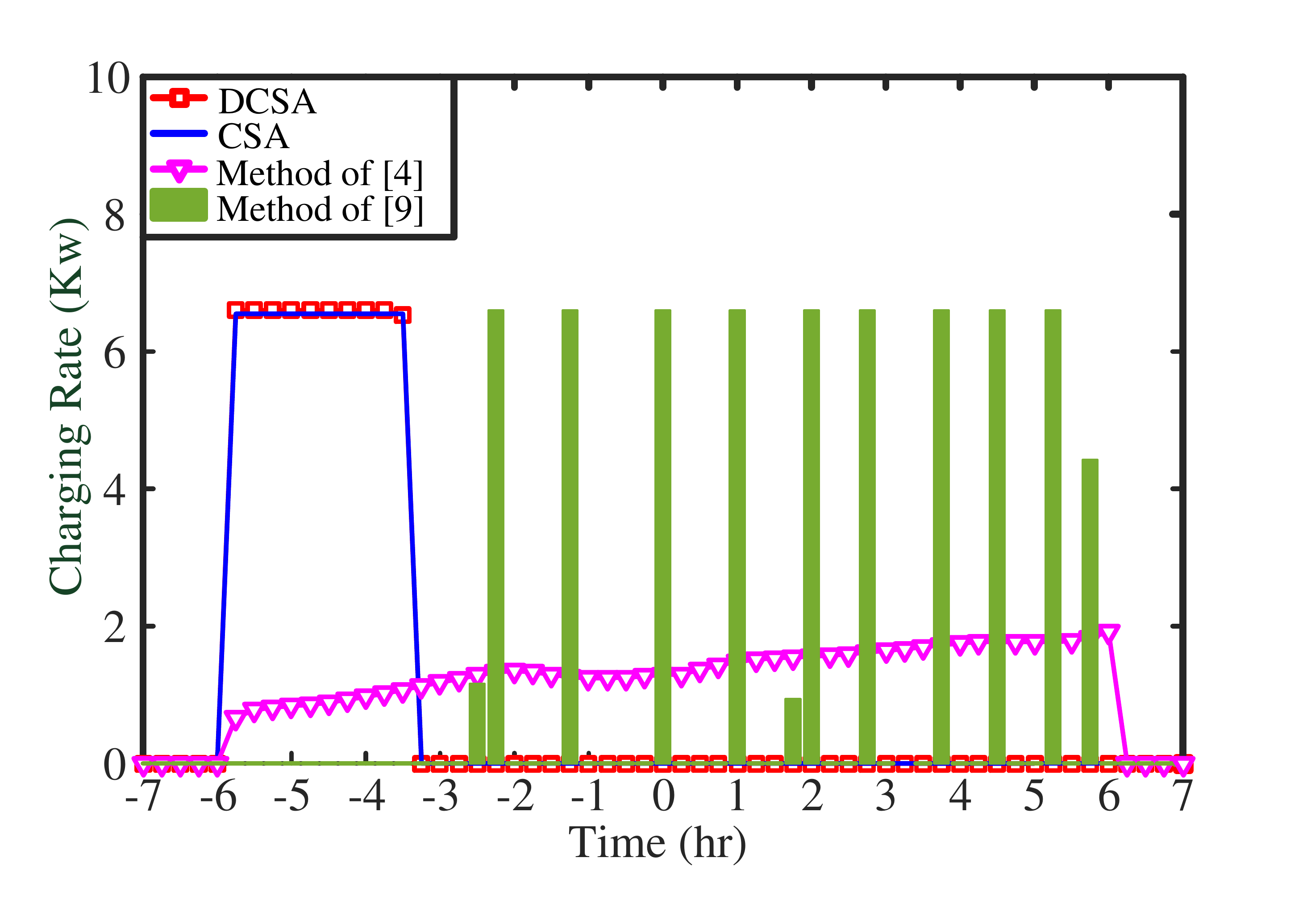} }%
\caption{Charging profile of the 10th EV under different methods.}\label{fig:charge_profile_compare}
\end{center}
\end{figure}

\subsection{Evaluation in Terms of Computation Time}
In Section \ref{sec:Low_com_dis_alg}, we present a low-complexity distributed algorithm and claim that the computation complexity is very low.
To prove this claim, we simulate $200$ and $2,000$ EVs in terms of computation time and charging costs using CSA and DCSA then the results are listed in Table \ref{tb:cost_bisec}.

\begin{table} \small
\renewcommand{\arraystretch}{1.3}
\begin{center}
\caption{Computation time comparison for finding the minimum charging cost}\label{tb:cost_bisec}
\begin{tabular}{|C{2.5cm}|c|c|c|c|c|c|c|c|}
\hline
Scenario    							&     				  &  CSA  		& DCSA \\
\cline{1-4}
\multirow{1}{*}{200 EVs }               & Time (s)             & $49.45$     & $ 8.57 \times 10^{-4} $  \\
\cline{2-4}
\multirow{1}{*}{9,246 Constraints }      & Cost (C$\$$)       & $27.65$     & $\phantom{0}27.85$        \\
\hline \hline
\multirow{1}{*}{2,000 EVs }              & Time (s)             & N.A.        & $ 8.23 \times 10^{-4} $   \\
\cline{2-4}
\multirow{1}{*}{96,048 Constraints }     & Cost (C$\$$)       & N.A.        & $2239.90$        \\
\hline

\end{tabular}
\end{center}
\end{table}

For the case with $200$ EVs, the computation time for DCSA is $8.57 \times 10^{-4} $ second which indicates about $57,700$ times faster than that using the interior-point method (the CSA).
The proposed algorithm has very similar running times for $200$ and $2,000$ EVs.
However, the interior-point method is unavailable for ${2,000}$ EVs due to the $10$ times greater number of the constraints.

\begin{table} \small
\renewcommand{\arraystretch}{1.3}
\begin{center}
\caption{Computation time comparison for finding the minimum user convenience level}\label{tb:uc_bisec}
\begin{tabular}{|C{2cm}|c|c|c|c|c|c|c|c|}
\hline
Scenario    							&      				  &  UCM  		& DUCM \\
\cline{1-4}
\multirow{2}{*}{200 EVs }               & Time (s)             & $ 7.83 \times 10^{-3} $       & $ 5.31 \times 10^{-2} $  \\
\cline{2-4}
                                        & $u_{\min}$           & $ 2.20 \times 10^{-3} $       & $ 2.20 \times 10^{-3} $   \\
\hline \hline
\multirow{2}{*}{{2,000} EVs }           & Time (s)             & $ 3.08 \times 10^{-2} $       & $ 5.46 \times 10^{-1} $    \\
\cline{2-4}
                                        & $u_{\min}$           & $ 2.02 \times 10^{-3} $       & $ 2.02 \times 10^{-3} $    \\
\hline

\end{tabular}
\end{center}
\end{table}

Table \ref{tb:cost_bisec} shows a comparison of the first phase of the two algorithms.
The second phase is compared next.
The DCSA results in Table \ref{tb:cost_bisec}, utilized to compare UCM and DUCM, as listed in Table \ref{tb:uc_bisec} and illustrated in Fig.~\ref{fig:iter_compare_UC}.
If the available power is the same in the simulation, the distributed algorithm can obtain the same performance as the centralized one.
Moreover, DUCM can convergence after $10$ iterations between all SAs.
According to the results, DUCM needs more time for ${2,000}$ EVs due to the iteration between all SAs; however, the distributed algorithm can still make a decision under one second with ${2,000}$ EVs in the charging station.
The proposed distributed algorithm thus significantly reduces the computation time.

\begin{figure}
\begin{center}
\resizebox{3in}{!}{%
\includegraphics*{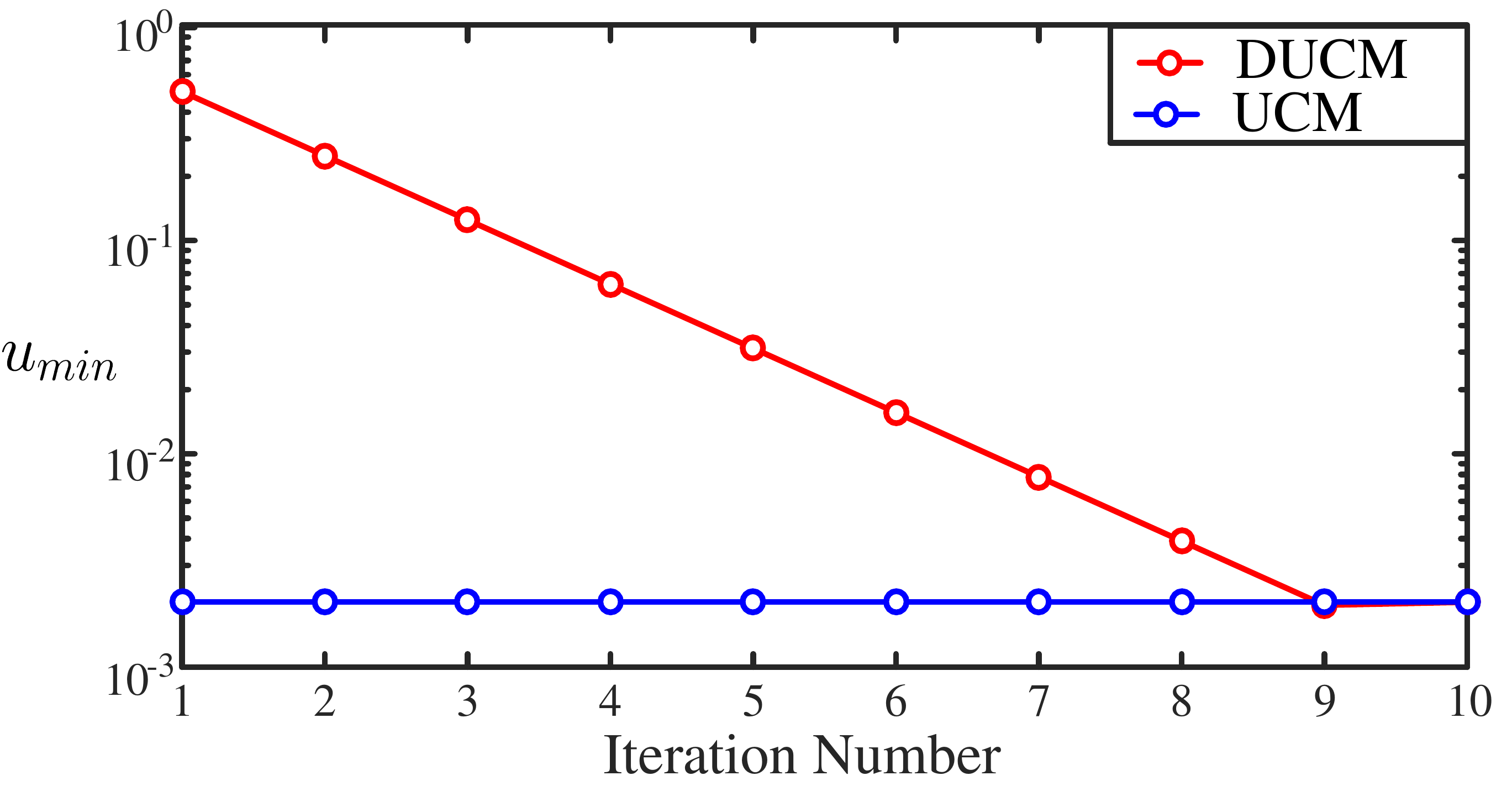}}%
\caption{Iteration of DUCM and its convergence with UCM.}\label{fig:iter_compare_UC}
\end{center}
\end{figure}

\subsection{Comparisons with different methods}
With the exception of centralized and decentralized algorithms, some interesting algorithms still remain.
In the simulations, we compare four algorithms, namely DCSA (the proposed low-complexity distributed method), the method of \cite{2015-tan-queue} (queue method), the method of \cite{2014-prob-EVselect} (stochastic optimization), and the method of \cite{2015-mpc-EVselect} (model predictive control approach).
To compare the charging cost equally, the cost function is replaced with (\ref{eq:obj1}).
The number of EVs is set to $100$.
We provide the simulations by comparing the total charging cost, total user convenience, average charging time, and peak load after scheduling. 
Also, user convenience is a parameter scaled from $0$ to $1$.
Therefore, we give the EVs, which finish the charging tasks and stay in the charging station before the deadline, with maximum user convenience, which is $1$.
By doing so, if the EVs finish the charging tasks as soon as possible, then EV owners can obtain the maximum user convenience.
This idea helps us show the advantage of reducing the average charging time.
The comparisons are in different scales.
Thus, we normalize them with the largest value.
The simulation results are provided in Fig.~\ref{fig:compare_with_other}.

The simulation result shows that the total charging cost of DCSA is almost the same as that of the method of \cite{2015-mpc-EVselect} because both works focus on charging-cost minimization with the objective function.
With this approach, the peak load after charging obtains the same result. 
User convenience is designed with the remaining time slots and the minimum number of time slots required to finish the charging. 
By contrast, in the method of \cite{2015-mpc-EVselect}, EVs are selected only based on the user-defined deadline, which can be regarded as the same as the remaining time slots.
As our work considers more parameters than the method of \cite{2015-mpc-EVselect}, the total user convenience of the proposed method is slightly higher.
The objective function listed in \cite{2015-tan-queue} has a trade-off term, and thus the method of \cite{2015-tan-queue} cannot obtain the minimum charging cost.
The method of \cite{2014-prob-EVselect} does not consider the cost.
Thus, the charging cost has the maximum value. 
Then, both \cite{2015-tan-queue} and \cite{2014-prob-EVselect} do not consider charging time and user convenience, and their performance cannot approach the results of the decentralized algorithm and the method of \cite{2015-mpc-EVselect}. 
Given that the current work also considers charging time with user convenience, maximum user convenience and minimum average charging time are utilized.

\begin{figure}
\begin{center}
\resizebox{3.2in}{!}{%
\includegraphics*{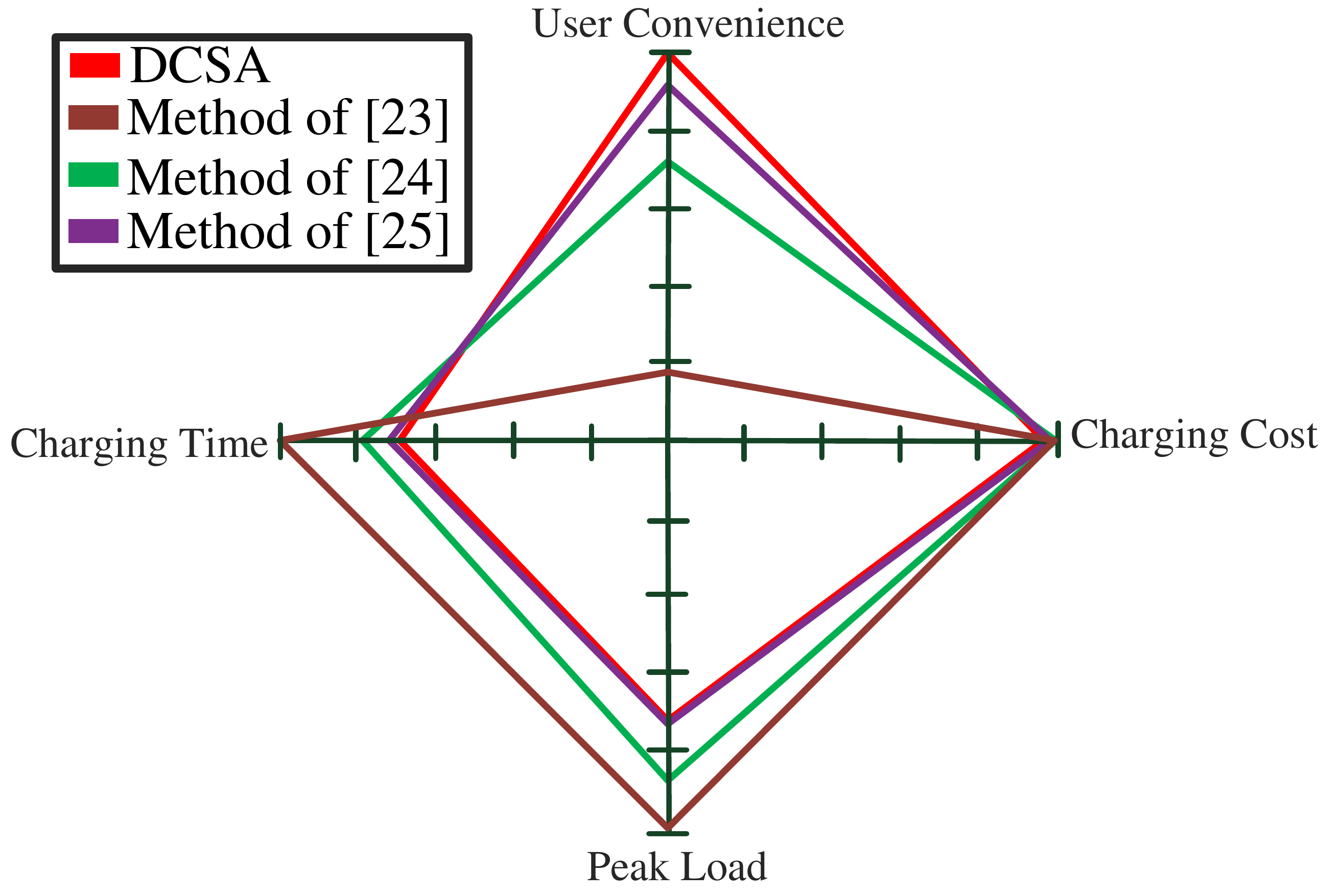} }%
\caption{Comparison with other works.}\label{fig:compare_with_other}
\end{center}
\end{figure}

\subsection{Evaluation in Terms of Data Rate}

\begin{table*}
\begin{center}
\caption{Transmission and reception comparison of centralized and distributed method}\label{tb:data_rate_compare}
\begin{tabular}{|c|c|c|c|c|c|c|}
  \hline
    \multirow{2}{*}{Communication} &\multicolumn{2}{c|}{CSA}     &\multicolumn{2}{c|}{DCSA}  \\
    \cline{2-5}
						    & Transmission    & Reception          & Transmission   & Reception\\
    \hline
    At EV  					& $2$ to SA       & $1$ from SA        & $2$ to SA      & $1$ from SA \\
    \hline
    \multirow{3}{*}{At SA}  & $1$ to each EV  &  $2$ from each EV  & $1$ to each EV &  $2$ from each EV\\

     &  $7 |{\cal H}_{m, t}|$ to CA  &   $ |{\cal H}_{m, t}|$ from CA  & $|{\cal W}_{m, t}| +1$ to CA  & $1$ from CA\\

     &                          &                            & $4$ to next SA  & $4$ from previous SA \\
    \hline
    At CA & $ |{\cal H}_{m, t}|$ to  SA  &  $7|{\cal H}_{m, t}|$ form SA  & 1 to each  SA  &  $|{\cal W}_{m, t}|+1$ from each SA\\

    \hline
    \multirow{3}{*}{Total}  &\multicolumn{2}{c|}{ $ \sum_{m=1}^{M} |{\cal H}_{m, t}| $ ~(between SA and EV)}          &\multicolumn{2}{c|}{ $ \sum_{m=1}^{M} |{\cal H}_{m, t}| $ (between SA and EV)}    \\
						    &\multicolumn{2}{c|}{ $ \sum_{m=1}^{M} 8|{\cal H}_{m, t}| $ (between CA and SA)}          &\multicolumn{2}{c|}{  $\sum_{m=1}^{M} (|{\cal W}_{m, t}| +2 )$ (between CA and SA)}     \\
						    &\multicolumn{2}{c|}{ }                                &\multicolumn{2}{c|}{ $8Ma$ (between SA and SA)}    \\
    \hline
\end{tabular}
\end{center}
\end{table*}

The numbers of transmitted messages between the centralized and distributed algorithms are compared in Table \ref{tb:data_rate_compare}, in which $|{\cal A}|$ denotes the cardinality of set ${\cal A}$.
For the two algorithms, the variables transmitted from EVs to SAs are identical; $r_{i}$ and ${\tt SOC}_{i}^{\rm fin}$.
Then, EV $i$ receives the decision of the charging rate at time slot $t$, $P_{i, t}$, from the SA.
Therefore, the difference of the two algorithms lies in the interaction between the CA and the SAs.

For the CSA, each SA sends $u_{i, t}$, $P_{i_{\max}}$, $P_{i_{\min}}$, $r_{i}$, $E_{i}^{\rm ini}$, $E_{i}^{\rm fin}$, and $E_{i}^{\rm cap}$ to the CA; thus, the total amount is $7 \times|{\cal H}_{m, t}|$.
After obtaining the scheduling result, the CA broadcasts $P_{i, t}$ to SAs, and then each SA receives the amount of $|{\cal H}_{m, t}|$.
Therefore, the total number of messages exchanged between the CA and the SAs is $ \sum_{m=1}^{M} 8|{\cal H}_{m, t}| = 8N$.
The number of transmitted units can then be denoted as $O(N)$, which means the number of transmitted units mostly depends on the total number of EVs.

In the DCSA, the $m$th SA sends the demand $d_{m}$ and its time window information to the CA.
The CA then broadcasts the available power to all the SAs.
Therefore, the total amount of data exchanged between the CA and the SAs is $\sum_{m=1}^{M} (|{\cal W}_{m, t}| +2 )$.
After obtaining the available power, the SAs use DUCM to determine the charging decision.
Each SA receives the $u_{\min}$, $P^{\rm char}$, $b^{\rm char}$, and $i^{\rm char}$ values from its previous SA.
Next, the new variables are recalculated and passed to the next SA.
Therefore, each SA needs to transmit and receive four data; the total amount of data in one iteration then becomes $8M$.
Assuming that the iteration converges in $a$ time, the total number of transmitted messages during the iteration is approximately $8Ma$.
Hence, the total amount of data transmitted for the DCSA is $\sum_{m=1}^{M} (|{\cal W}_{m, t}| +2 )+8Ma$.
The number of transmitted units can then be denoted as $O\left( \max  (|{\cal W}_{m, t}|, M) \right)$, which means the number of transmitted units mostly depends on the maximum value of the number of charging stations and the length of the sliding time window.

We studied a case with $200$ EVs randomly distributed among $6$ SAs.
We assumed that the time window length for each SA is $96$ for the worst case, that the EVs stay for a whole day in a charging station, and the iteration number of DUCM is $10$, as shown in Fig.~\ref{fig:iter_compare_UC}.
Table \ref{tb:data_rate_compare} shows that the CSA needs to transmit $1,600$ units of data for each time slot; however, this amount is only $1,068$ for the DCSA.
Thus, the total amount of data required for both algorithms is approximately $33.25\%$ less with the DCSA compared with the CSA.
The reduction can vary across different settings and cases.
However, in most of the scenarios, the number of EVs, $N$, is greater than the number of charging stations, $M$, and the length of the sliding time window, $|{\cal W}_{m, t}|$.
Therefore, the proposed distributed algorithm, DCSA, can still achieve a considerable reduction in the number of transmitted units.

\section{Conclusion}\label{sec:conclusion}

This work studied the EV charge scheduling problem.
Unlike previous studies that focused only on charging-cost minimization or user-convenience maximization, we considered both factors and proposed an efficient centralized scheduling mechanism to solve the formulated bi-objective optimization problem.
The proposed method can simultaneously obtain the minimum charging cost and reduce the charging time.
Although the centralized scheduling algorithm can guarantee good performance, this algorithm may have a high computational complexity and data transmission rate.
Therefore, a low-complexity distributed algorithm was proposed to obtain a performance level comparable to that of the centralized algorithm, which also significantly reduces the number of transmitted messages.

{\renewcommand{\baselinestretch}{1}
\begin{footnotesize}
\bibliographystyle{IEEEtran}
\bibliography{References_SmartGrid.bib}
\end{footnotesize}}

\appendix

\subsection{Proof of Pareto Optimality}\label{subsec:proof_optimal}
In this Appendix, we prove that the solution of CSA is Pareto-optimal.
The solution solved by the proposed method is $( z^{*}_{t}, \qp^{*})$ and suppose there exists another solution, $(\bar{z_{t}}, \bar{\qp})$, can reach lower charging cost and higher user convenience.

Because $\mathcal{P}_1$ is proved to be convex, the charging cost reaches optimal then its corresponding load, $z^{*}_{t}$, is also optimal value.
Therefore, $z^{*}_{t}$ is equal to $\bar{z_{t}}$.
Furthermore, we only have to show why UCM can obtain higher user convenience.

Putting (\ref{eq:weight}) into $\mathcal{P}_2$, the objective function becomes
\begin{equation}
\max_{\qP} \sum_{t \in \qw_{t}} \sum_{i \in {\cal H}_{t}} \frac{ P_{i_{\max}}}{ \left( {\tt SOC}_{i}^{\rm fin} - \left( {\tt SOC}_{i, 1} + \sum_{t=1}^{t-1} \frac{P_{i, t}}{E_{i}^{\rm cap}} \right) \right) E_{i}^{\rm cap} w_{i, t}  }.
\end{equation}
Assuming $E_{i}^{\rm cap}$  and $ P_{i_{\max}}$ are constant for every EV.
In order to maintain the maximum value, the EV with higher user convenience will receive the charging first, since $w_{i, t}$ is a decreasing value, making  user convenience higher after charging.
The value of ${\tt SOC}_{i, 1}  + \sum_{t=1}^{t-1} \frac{P_{i, t}}{E_{i}^{\rm cap}}$ should therefore approach ${\tt SOC}_{i}^{\rm fin}$ as soon as possible.
Hence, the charging rate,  $P_{i, t}$, must be the maximum charging rate.
From the proof, $\bar{\qs}$ and $\bar{\qp}$ are equal to $\qs^{*}$ and $\qp^{*}$, respectively.
Therefore, the solution of our proposed algorithm is the Pareto-optimal solution.

On the basis of the previous proof, we can explain the Pareto optimality with a graph that is used to find the Pareto front.
Moreover, we can show the proposed algorithm can find the desired Pareto-optimal solution.
Assuming there are total $100$ EVs, we attempt to find all possible solutions according to the two objective functions.
The user convenience of the EVs, which complete the charging tasks and remain in the charging station before the deadline, is determined to be $1$.
In Fig. \ref{fig:pareto_ana}, blue and gray areas represent all the possible solutions for the EV charging.
The blue line can then be regarded as the Pareto front.
In our formulation, the constraint in (\ref{eq:total_for}) shows that we have to meet the requirement of EVs when they leave, which makes it a bound constraint.
Therefore, only the blue area are the feasible solutions for the formulation.

\begin{figure}
\begin{center}
\resizebox{3.5in}{!}{%
\includegraphics*{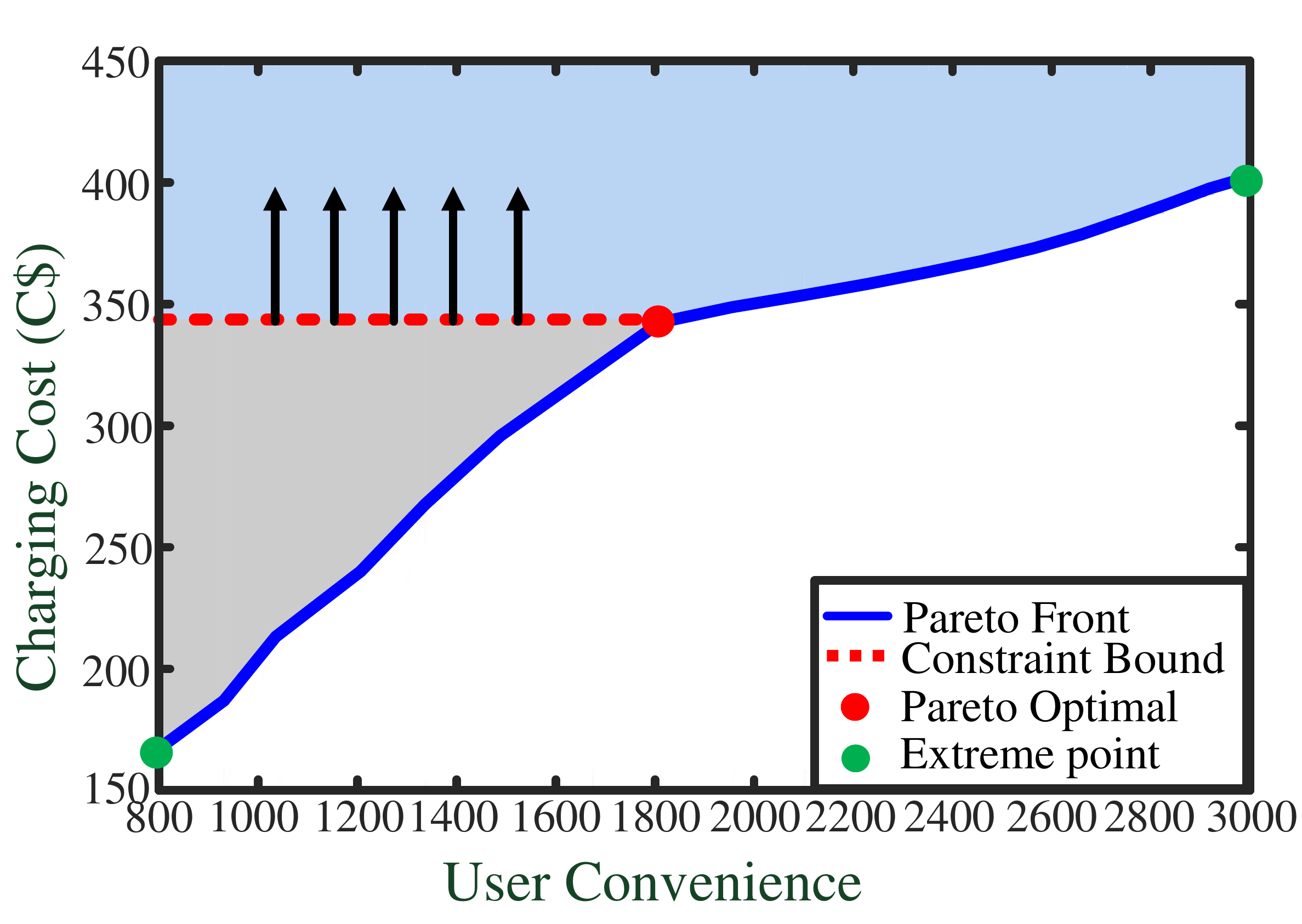} }%
\caption{Pareto front and Pareto-optimal solution. }\label{fig:pareto_ana}
\end{center}
\end{figure}

In the blue area, we want to find the solution with minimum charging cost and the maximum user convenience at the same time.
However, it is difficult to find the optimal solution for both objective functions.
Therefore, we consider charging-cost minimization initially because we stand at the side of CA to make the decision.
A dominated solution such exists for the charging cost.
Next, among all the possible charging patterns under the given charging cost, we select the one that can achieve maximum user convenience.
By doing so, we choose the red point as the desired Pareto-optimal solution and it is also the solution solved by the proposed algorithm.
However, with the exception of the red point, some interesting solutions in the Pareto front still remain.
Other preference of charging stations or EV owners can be considered, so that other solutions can be explored.

Other than the desired Pareto-optimal solution, there are two extreme solutions denoted with green points in Fig. \ref{fig:pareto_ana}.
The values for the extreme solutions are $(800,170)$ and $(3000,400)$, respectively.
The $(800,170)$ cannot meet user requirement and thus cannot be the solution candidate.
The $(3000,400)$ can be regarded as the charge scheduling without control.
That is, when the EV owners plug in the charging socket, the EVs can receive the power immediately.
This approach allows the EV owners to obtain the highest user convenience, but can also increase the charging cost to CA.
Therefore, we do not consider two extreme solutions as the solution candidates.

\subsection{Proof of Lemma 1}\label{subsec:proof_optimal_cost}
The objective function with the assumption is shown in (\ref{eq:int_cost}), in which $\left( k_{0} L_t ^{{\rm base}}+\frac{k_{1}}{2} {L_t^{\rm base}}^{2} \right)$ is a constant when the base load is determined.
Then, to obtain the maximum user convenience, all the available power will be used to charge EVs.
Therefore, we can just consider the first two terms and replace $z_{i, t}$ with $L_t^{\rm base} + \sum_{i \in {\cal H}_{t}} P_{i, t}$ as
\begin{equation} 
\sum_{t=1}^{T} \left( k_{0} \left( L_t^{\rm base}+\sum_{i \in {\cal H}_{t}} P_{i, t} \right)+\frac{k_{1}}{2} \left(L_t^{\rm base} + \sum_{i \in {\cal H}_{t}} P_{i, t}\right)^{2}\right) .
\end{equation}
We use $ \sum_{t=1}^{T} f( L_t^{\rm base} + \sum_{i \in {\cal H}_{t}} P_{i, t})$ to denote the above function.
Applying Jensen's inequality, $f \left(E[ L_t^{\rm base} + \sum_{i \in {\cal H}_{t}} P_{i, t}] \right) \leq E[f( L_t^{\rm base} + \sum_{i \in {\cal H}_{t}} P_{i, t})]$ , to the function, we obtain
\begin{align}
 & f \left( \frac{( L_1^{\rm base} + \sum_{i \in {\cal H}_{t}} P_{i, 1}) + \cdots + ( L_T^{\rm base} + \sum_{i \in {\cal H}_{t}} P_{i, T})}{T}\right)  \notag\\
 & \leq \frac{f( L_1^{\rm base} + \sum_{i \in {\cal H}_{t}} P_{i, 1}) + \cdots + f( L_T^{\rm base} + \sum_{i \in {\cal H}_{t}} P_{i, T})}{T}.
\end{align}
The equality holds when
\begin{align}
& L_1^{\rm base} + \sum_{ i \in {\cal H}_{t}} P_{i, 1} = \cdots = L_T^{\rm base} + \sum_{i \in {\cal H}_{t}} P_{i, T} \notag \\
& = \frac{( L_1^{\rm base} + \sum_{i \in {\cal H}_{t}} P_{i, 1}) + \cdots + ( L_T^{\rm base} + \sum_{i \in {\cal H}_{t}} P_{i, T})}{T}.
\end{align}
Therefore, the optimal solution is represented as
\begin{equation} \label{eq:jenson_solution}
\sum_{i \in {\cal H}_{t} } P_{i, t} = \frac{\sum_{t=1}^{T} \left(  L_t^{\rm base} + \sum_{i \in {\cal H}_{t}} P_{i, t}  \right)    }{T} - L_t^{\rm base}, \, \forall t.
\end{equation}

\end{document}